\def\vec#1{\mbox{\boldmath{$#1$}}}
\newtheorem{theorem}{Theorem}
\newtheorem{lemma}{Lemma}
\newtheorem{remark}{Remark}
\newtheorem{proposition}{Proposition}
\newtheorem{corollary}{Corollary}
\begin{document}

\begin{frontmatter}



\title{Chi-square approximation for the distribution of individual eigenvalues of a singular Wishart matrix }


\author[1]{Koki Shimizu \corref{mycorrespondingauthor}}
\author[1]{Hiroki Hashiguchi}

\cortext[mycorrespondingauthor]{{Corresponding author. Email address: \url{k-shimizu@rs.tus.ac.jp}~(K. Shimizu).}}

\affiliation[1]{organization={Tokyo University of Science},
            addressline={1-3 Kagurazaka}, 
            city={Shinjuku-ku},
            postcode={162-8601}, 
            state={Tokyo},
            country={Japan}}

\begin{abstract}
This paper discusses the approximate distributions of eigenvalues of a singular Wishart matrix.
We give the approximate joint density of eigenvalues by Laplace approximation for the hypergeometric functions of matrix arguments.
Furthermore, we show that the distribution of each eigenvalue can be approximated by the chi-square distribution with varying degrees of freedom when the population eigenvalues are infinitely dispersed. 
The derived result is applied to testing the equality of eigenvalues in two populations.
\end{abstract}

\begin{keyword} 
Hypergeometric functions, Laplace approximation, Spiked covariance model
\MSC[2010] 62E15 \sep
 62H10
\end{keyword}

\end{frontmatter}


%
\section{Introduction}
\label{sec:01}
In multivariate analysis, some exact distributions of eigenvalues for a Wishart matrix are represented by hypergeometric functions of matrix arguments.
James (1964) classified multivariate statistics problems into five categories based on hypergeometric functions.
However, the convergence of these functions is slow, and their numerical computation is cumbersome when sample sizes or dimensions are large.
Consequently, the derivation of approximate distributions of eigenvalues has received a great deal of attention.
Sugiyama (1972) derived the approximate distribution for the largest eigenvalue through the integral representation of the confluent hypergeometric function.
Sugiura (1973) showed that the asymptotic distribution of the individual eigenvalues is expressed by a normal distribution for a large sample size.
The chi-square approximation was discussed when the population eigenvalues are infinitely dispersed in Takemura and Sheena (2005) and Kato and Hashiguchi (2014).
Approximations for hypergeometric functions have been developed and applied to the multivariate distribution theory in Butler and Wood (2002, 2005, 2022).
Butler and Wood (2002) provided the Laplace approximation for the hypergeometric functions of a single matrix argument. 
The numerical accuracies for that approximation were shown in the computation of noncentral moments of Wilk's lambda statistic and the likelihood ratio statistic for testing block independence. 
This approximation was extended for the case of two matrix arguments in Butler and Wood (2005).
All the results addressed above were carried out for eigenvalue distributions for a non-singular Wishart matrix.

Recently, the distribution of eigenvalues for the non-singular case has been extended to the singular case; see Shimizu and Hashiguchi (2021, 2022) and Shinozaki et al. (2022).
Shimizu and Hashiguchi~(2021) showed the exact distribution of the largest eigenvalue for a singular case is represented in terms of the confluent hypergeometric function as well as the non-singular case.
The generalized representation for the non-singular and singular cases under the elliptical model was provided by Shinozaki et al. (2022). 

The rest of this paper is organized as follows.
In Section~2, we apply the Laplace approximation introduced by Butler and Wood (2005) to the joint density of eigenvalues of a singular Wishart matrix. 
Furthermore, we show that the approximation for the distribution of the individual eigenvalues can be expressed by the chi-square distribution with varying degrees of freedom when the population covariance matrix has spiked eigenvalues. 
Section~3 discusses the equality of the individual eigenvalues in two populations.
Finally, we evaluate the precision of the chi-square approximation by comparing it to the empirical distribution through Monte Carlo simulation in Section 4.
\section{Approximate distributions of eigenvalues of a singular Wishart matrix}
\label{sec:02}
Suppose that an $m\times n$ real Gaussian random matrix $X$ is distributed as $X\sim$ $N_{m,n}(O,\Sigma \otimes I_n)$, 
where $O$ is the $m \times n$ zero matrix, $\Sigma$ is a $m\times m$ positive symmetric matrix, and $\otimes$ is the Kronecker product. 
This means that the column vectors of $X$ are independently and identically distributed (i.i.d.) from $N_{m}(0, \Sigma)$ with sample size $n$, where $0$ is the $m$-dimensional zero vector.
The eigenvalues of $\Sigma$ are denoted by $\lambda_1, \lambda_2, \dots, \lambda_m$ and $\lambda_1\geq \lambda_2 \geq \cdots \geq \lambda_m>0$. 
Subsequently, we define the singular Wishart matrix as $W=XX^\top$, where $m>n$ and its distribution is denoted by $W(n,\Sigma)$. 
The spectral decomposition of $W$ is represented as $W=H_1L_1H_1^\top$, where $L_1=\mathrm{diag}(\ell_1,\dots, \ell_n)$ with $\ell_1>\ell_2>\cdots>\ell_n>0$ and the $m\times n$ matrix $H_1$ is satisfied by $H_1 H_1^\top=I_n$. 
The set of all $m\times n$ matrices $H_1$ with orthonormal columns is called the Stiefel manifold, denoted by 
$V_{n, m}=\{H_1\mid  H_1H_1^\top=I_n \}$, where $m\geq n$.
The volume of $V_{n, m}$ is represented by
\begin{align*}
\mathrm{Vol}(V_{n, m})=\int_{H_1\in V_{n,m}}(H_1^\top dH_1)=\frac{2^n\pi^{mn/2}}{\Gamma_n(m/2)}.
\end{align*}
For the definition of the above exterior product $(H_1^\top dH_1)$, see page 63 of Muirhead (1982).
 If $m=n$, Stiefel manifold $V_{m, m}$ coincides with the orthogonal groups $O(m)$. 
Uhlig~(1994) gave the density of $W$ as
\begin{align*}
f(W)=\frac{\pi^{(-mn+n^2)/2}}{2^{mn/2}\Gamma_n(n/2)|\Sigma|^{n/2}}|L_1|^{(n-m-1)/2}\mathrm{etr}(-\Sigma^{-1} W/2),
\end{align*}
where $\Gamma_m(a)=\pi^{m(m-1)/4}\prod_{i=1}^{m}\Gamma\{a-(i-1)/2\}$ and $\mathrm{etr}(\cdot)=\exp(\mathrm{tr}(\cdot))$.
Srivastava~(2003) represented the joint density of eigenvalues of $W$ in a form that includes an integral over the Stiefel manifold;
 \begin{align}
 \label{joint-eigen}
 \nonumber
 f(\ell_1,\dots, \ell_n)=\mbox{}&\frac{2^{-nm/2}\pi^{n^2/2}}{|\Sigma|^{n/2}\Gamma_n(n/2)\Gamma_n(m/2)}\prod_{i=1}^n \ell_i^{(m-n-1)/2}\prod_{i<j}^{n}(\ell_i - \ell_j) \\
  &\times \int_{H_1\in V_{n, m}} \mathrm{etr}~\biggl(-\frac{1}{2}\Sigma^{-1}H_1L_1H_1^\top\biggl)~(dH_1),
\end{align} 
where $(dH_1)=\frac{(H_1^\top dH_1)}{\mathrm{Vol}(V_{n, m})}$ and $\int_{H_1 \in V_{n,m}}(dH_1)=1$.

 The above integral over the Steifel manifold was evaluated by Shimizu and Hashiguchi~(2021) as the hypergeometric functions of the matrix arguments.
We approximate \eqref{joint-eigen} by Laplace approximation for the hypergeometric functions of two matrix arguments provided in Butler and Wood (2005).
  
 For a positive integer $k$, let $\kappa=(\kappa_1,\kappa_2,\dots,\kappa_m)$ denote a partition of $k$ with $\kappa_1\geq \kappa_2 \geq \cdots \geq \kappa_m\geq 0$ and $\kappa_1+\cdots +\kappa_m=k$. The set of all partitions with less than or equal to $m$ is denoted by $P^k_{m}=\{\ \kappa=(\kappa_1,\dots,\kappa_m)\mid \kappa_1+\dots+\kappa_m=k, \kappa_1\geq \kappa_2 \geq\cdots \geq \kappa_m \geq 0 \}$. 
The  Pochhammer symbol for a partition $\kappa$ is defined as $(\alpha)_\kappa=\prod_{i=1}^{m}\{\alpha-(i-1)/2\}_{\kappa_i}$, where $(\alpha)_k=\alpha(\alpha+1)\cdots (\alpha+k-1)$ and $(\alpha)_0=1$. 
For integers, $p,q\geq 0$ and $m\times m$ real symmetric matrices $A$ and $B$, we define the hypergeometric function of two matrix arguments as
\begin{align}
\label{hypterpFq}
{}_pF_q{}^{(m)}(\mbox{\boldmath$\alpha$};\mbox{\boldmath$\beta$};A, B) = \sum_{k=0}^\infty \sum_{\kappa \in P^k_m}\frac{(\alpha_1)_{\kappa} \cdots (\alpha_p)}{(\beta_1)_{\kappa} \cdots (\beta_q)}\frac{C_\kappa(A)C_\kappa(B)}{k!C_\kappa(I_m)}, 
\end{align}
where $\mbox{\boldmath$\alpha$}=(\alpha_1,\ldots,\alpha_p)^\top$, $\mbox{\boldmath$\beta$}=(\beta_1,\ldots,\beta_q)^\top$ and $C_\kappa(A)$ is the zonal polynomial indexed by $\kappa$ with the symmetric matrix $A$, see the details given in Chapter~7 of Muirhead (1982).  
The hypergeometric functions with a single matrix are defined as 
\begin{align}
\label{hyptersingle;pFq}
{}_pF_q{}(\mbox{\boldmath$\alpha$};\mbox{\boldmath$\beta$};A)={}_pF_q{}(\mbox{\boldmath$\alpha$};\mbox{\boldmath$\beta$};A, I_m).  
\end{align}
The special cases ${}_1F_1{}$ and ${}_2F_1{}$ are called the confluent and Gauss hypergeometric functions, respectively.
Butler and Wood~(2002) proposed a Laplace approximation of ${}_1F_1{}$ and ${}_2F_1{}$ through their integral expressions. 
 They showed the accuracy of that approximation is greater than the previous results.
This approximation was extended to the complex case in Butler and Wood~(2022).
 The important property of \eqref{hypterpFq} is the integral representation over the orthogonal group 
\begin{align}
\label{integral-twohypter}
{}_pF_q{}^{(m)}(\mbox{\boldmath$\alpha$};\mbox{\boldmath$\beta$};A,B)
&=\int_{H\in O(m)}{_pF_q}(\mbox{\boldmath$\alpha$};\mbox{\boldmath$\beta$};AHBH^\top)(dH),
\end{align}
where $(dH)$ is the invariant measure on the $m\times m $ orthogonal group $O(m)$. 
Integral representations \eqref{integral-twohypter} are a useful tool for obtaining approximation of ${}_pF^{(m)}_q{}$. 
Asymptotic expansions of ${}_0F^{(m)}_0{}$ are given in Anderson~(1965) when both two positive definite matrix arguments are widely spaced.
Constantine and Muirhead (1976) gave the asymptotic behavior of ${}_0F^{(m)}_0{}$ when the population eigenvalues are multiple.  
From the integral expression \eqref{integral-twohypter}, Butler and Wood~(2005) provided Laplace approximations for ${}_pF_q{}^{(m)}$. 
 \begin{lemma}
 \label{laplace-approximation}
 Let the two diagonal matrices be $A=\mathrm{diag}(a_1,\ldots,a_m)$ and
$B=\mathrm{diag}(b_1,\ldots, b_1,b_2, \ldots, b_r, \ldots, b_r) $,
where $a_1 > a_2 > \cdots > a_m > 0$, $b_1 > b_2>\cdots > b_r \geq 0$, 
and $b_j$ have multiplicity $m_j$ in which $m=\sum_{j=1}^{r} m_{j}$.
Let $\Omega(m_1,\ldots,m_r) =\mathrm{Vol}(O(m))^{-1} \prod_{j=1}^{r} \mathrm{Vol}(O(m_j))$.
Then the Laplace approximation of ${}_pF_q{}^{(m)}$ is given as 
\begin{align*}
{}_p\hat{F}_q{}^{(m)}(\vec\alpha ; \vec\beta ; A , B ) = (2\pi)^{\frac{s}{2}}\Omega(m_1 ,\dots,m_r) J^{-\frac{1}{2}}
{}_pF_q(\vec{\alpha} ; \vec{\beta} ; AB ),
\end{align*}
where $s=\sum_{i=1}^{r-1}\sum_{j=i+1}^{r}m_im_j$ and Hessian $J$ is defined in Butler and Wood (2005).
 \end{lemma}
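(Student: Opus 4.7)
The plan is to apply the method of Laplace to the integral representation
\[
{}_pF_q^{(m)}(\vec{\alpha};\vec{\beta};A,B)=\int_{H\in O(m)}{}_pF_q(\vec{\alpha};\vec{\beta};AHBH^\top)(dH)
\]
appearing in \eqref{integral-twohypter}. The first step is to locate the critical points of the integrand $g(H)={}_pF_q(\vec{\alpha};\vec{\beta};AHBH^\top)$ on the compact manifold $O(m)$. Since $A$ has simple spectrum and $B$ has block structure with multiplicities $m_1,\ldots,m_r$, the critical set decomposes into cosets of the stabilizer $K=O(m_1)\times\cdots\times O(m_r)$, indexed by permutations matching diagonal entries of $A$ with blocks of $B$. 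Under the stated orderings, the identity coset is dominant and $AHBH^\top=AB$ on it, so $g$ takes the value ${}_pF_q(\vec{\alpha};\vec{\beta};AB)$ on this orbit.

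Next, I would parametrize a neighbourhood of the dominant orbit by $H=\exp(X)K_0$, with $K_0$ ranging over $K$ and $X$ a skew-symmetric matrix whose within-block entries vanish. The within-block directions are absorbed exactly by the $K$-integral, which contributes $\prod_{j=1}^{r}\mathrm{Vol}(O(m_j))$; together with the $\mathrm{Vol}(O(m))^{-1}$ normalization of $(dH)$ this produces the factor $\Omega(m_1,\ldots,m_r)$. The genuinely transverse coordinates are the $s=\sum_{i<j}m_im_j$ off-block entries of $X$ that remain after the stabilizer is quotiented out.

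For the Laplace expansion itself I would substitute $H=\exp(X)$ into $AHBH^\top$, expand to second order in $X$, insert the result into the Taylor series of ${}_pF_q$, and use the derivative identities for zonal polynomials to collect a quadratic form $-\tfrac12 X^\top Q X$ at the critical point. Because each off-diagonal skew coordinate $X_{ij}$ pairs only with $X_{ji}=-X_{ij}$, the Hessian $Q$ is block diagonal with $2\times 2$ blocks indexed by pairs $(i,j)$, which yields the product representation of $\det Q=J$ over pairs with $\sigma(i)<\sigma(j)$ described in Butler and Wood (2005). The Gaussian integral over the $s$-dimensional transverse space then delivers $(2\pi)^{s/2}J^{-1/2}$, and assembling all factors gives the claimed formula.

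The principal obstacle is the Hessian computation. One must differentiate $g$ twice along the tangent directions of $O(m)$ at the identity while tracking how the spectral data of $A$, $B$ and the hypergeometric parameters $\vec{\alpha},\vec{\beta}$ enter each entry of $Q$. Butler and Wood (2005) carry out this bookkeeping explicitly via zonal polynomial derivative identities, and I would import their closed form for $J$ rather than re-derive it. Contributions from the subdominant critical cosets are exponentially smaller than the dominant one and are dropped at the Laplace level of approximation.
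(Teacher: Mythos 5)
The paper does not prove this lemma; it states it as a result imported verbatim from Butler and Wood (2005), so there is no internal argument to compare yours against. Your sketch is a plausible reconstruction of the Butler--Wood proof and contains the right ingredients: the integral representation over $O(m)$ from \eqref{integral-twohypter}, identification of the critical set as a union of cosets of the stabilizer $K = O(m_1)\times\cdots\times O(m_r)$ indexed by admissible matchings of $A$-entries to $B$-blocks, extraction of the volume factor $\Omega(m_1,\ldots,m_r)$ from the $K$-directions together with the $\mathrm{Vol}(O(m))^{-1}$ normalisation of $(dH)$, the correct count $s=\sum_{i<j}m_im_j$ of transverse coordinates (check: $\dim O(m)-\dim K = \tfrac{1}{2}\bigl[(\sum m_j)^2-\sum m_j^2\bigr]=s$), and the Gaussian contribution $(2\pi)^{s/2}J^{-1/2}$. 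You correctly flag the Hessian as the heavy lifting and defer to the citation for its closed form, which is exactly what the paper does.

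Two small imprecisions worth noting. First, once you restrict to skew-symmetric $X$, the entries $X_{ij}$ with $i<j$ are already the independent tangent coordinates ($X_{ji}=-X_{ij}$ is not a separate variable), so the Hessian at the critical orbit is \emph{diagonal} in these coordinates rather than "block diagonal with $2\times2$ blocks"; the product form of $J$ (one scalar factor $(a_i-a_j)(b_{\sigma(i)}-b_{\sigma(j)})\Psi_{ij}$ per admissible pair) reflects this diagonality. Second, for $p,q\geq 1$ the integrand ${}_pF_q(\vec\alpha;\vec\beta;AHBH^\top)$ is not a genuine exponential in any large parameter, so describing the subdominant cosets as "exponentially smaller" is a heuristic; Butler and Wood's construction is a calibrated approximation that also feeds in their earlier single-argument Laplace formulas for ${}_pF_q$, not a textbook Laplace expansion with an explicit asymptotic scale. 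Neither point undermines the structure of your outline, which is a sound high-level account of a proof that the paper itself chooses not to reproduce.
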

Shimizu and Hashiguchi~(2021) showed the following relationship
\begin{align}
\label{integral-formula}
\displaystyle \int_{H_1\in V_{n,m}}{_pF_q}(AH_1B_1H_1^\top)(dH_1)&=\displaystyle \int_{H\in O(m)}{_pF_q}(AHBH^\top) (dH)
 \end{align}
 for an $m \times m$ matrix $B=\left( 
\begin{array}{cc}
B_1 & O\\
O& O
\end{array}
\right)$, where $B_1$ is an $n \times n$ symmetric matrix and $O$ is the zero matrix.
 From \eqref{integral-formula}, the joint density \eqref{joint-eigen} can be rewritten by
 \begin{align}
 \label{integral-expression}
 \nonumber
 f(\ell_1,\dots, \ell_n)\propto\mbox{}& \int_{H\in O(m)} \mathrm{etr}~\biggl(-\frac{1}{2}\Sigma^{-1}HLH^\top\biggl)~(dH)\\
 =\mbox{}&{}_0F_0{}^{(m)}\biggl(-\frac{1}{2}\Sigma^{-1}, L\biggl),
 \end{align}
 where $L=\mathrm{diag}(\ell_1,\dots, \ell_n, 0,\dots,0)$ is the $m\times m$ matrix and the symbol ``$\propto$'' means that a constant required for scaling is removed. 
Applying Laplace's method to the above joint density, we have the approximation for the joint density of eigenvalues.
\begin{proposition}
\label{proposition-jointdensity}
The joint density of eigenvalues of a singular Wishart matrix by Laplace approximation is expressed by 
\begin{align}
\label{appro-jointdensity}
 \frac{\pi^{n(n-m)/2}}{2^{nm/2}|\Sigma|^{n/2}\Gamma_n(n/2)}\prod_{i=1}^{n}\ell_i^{(m-n-1)/2} \prod_{i<j}^{n} (\ell_i-\ell_j)~\mathrm{exp}\left(-\frac{1}{2}\sum_{i=1}^{n}\frac{\ell_i}{\lambda_i} \right)\prod_{i<j}^{n}\left(\frac{2\pi}{c_{ij}} \right)^{1/2}\prod_{i=1}^{n}\prod_{j=n+1}^{m}\left(\frac{2\pi}{d_{ij}} \right)^{1/2},
\end{align}
where $c_{ij}=\frac{(\ell_i-\ell_j)(\lambda_i-\lambda_j)}{\lambda_i \lambda_j} , d_{ij} = \frac{\ell_i(\lambda_i-\lambda_j)}{\lambda_i\lambda_j}$.
\begin{proof}
Applying Lemma~\ref{laplace-approximation} to the hypergeometric functions in \eqref{integral-expression}, the integral over the Stiefel manifold in \eqref{joint-eigen} is approximated by
\begin{align}
\label{appro}
\frac{2^n}{\mathrm {Vol}(V_{n, m})}\exp \left(-\frac{1}{2}\sum_{i=1}^n\frac{\ell_i}{\lambda_i}\right)\prod_{i<j}^n\left(\frac{2\pi}{c_{ij}}\right)^{1/2}\prod_{i=1}^n\prod_{j=n+1}^{m}\left(\frac{2\pi}{d_{ij}}\right)^{1/2}.
\end{align}
Substituting \eqref{appro} to \eqref{joint-eigen}, we have the desired result.  
\end{proof}
\end{proposition}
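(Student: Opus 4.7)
The plan is to invoke Lemma~\ref{laplace-approximation} on the ${}_0F_0^{(m)}$ representation in \eqref{integral-expression} and substitute the result back into the joint density \eqref{joint-eigen}. Take $A=-\tfrac{1}{2}\Sigma^{-1}$, whose eigenvalues $a_i=-1/(2\lambda_i)$ are distinct, and $B=L$, whose eigenvalues are $\ell_1>\cdots>\ell_n>0$ together with a zero eigenvalue of multiplicity $m-n$. In the notation of Lemma~\ref{laplace-approximation} this gives $r=n+1$ with $m_1=\cdots=m_n=1$ and $m_{n+1}=m-n$, so that $s=\binom{n}{2}+n(m-n)$. Using $\mathrm{Vol}(O(1))=2$ and the standard identity $\mathrm{Vol}(V_{n,m})=\mathrm{Vol}(O(m))/\mathrm{Vol}(O(m-n))$, the factor $\Omega$ simplifies to $2^n/\mathrm{Vol}(V_{n,m})$.

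Since ${}_0F_0(AB)=\mathrm{etr}(AB)=\exp(-\tfrac{1}{2}\sum_{i=1}^{n}\ell_i/\lambda_i)$, the remaining step is to evaluate the Hessian $J$ from Butler and Wood (2005). The defining product runs over pairs $(i,j)$ with $1\le i<j\le m$ having distinct $a$- and $b$-values, which I would split into two regimes: pairs with $1\le i<j\le n$ (both $b$-values nonzero) and pairs with $1\le i\le n<j\le m$ (one $b$-value equal to zero). Substituting $a_i-a_j=(\lambda_i-\lambda_j)/(2\lambda_i\lambda_j)$ and simplifying $\Psi_{ij}$ for ${}_0F_0$ in each regime should collapse the first product to $\prod_{i<j}^{n} c_{ij}$ and the second to $\prod_{i=1}^{n}\prod_{j=n+1}^{m} d_{ij}$. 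Combined with the $(2\pi)^{s/2}$ and $\Omega$ factors, this yields the approximation \eqref{appro} for the Stiefel integral. Substituting \eqref{appro} into \eqref{joint-eigen} and absorbing $\Gamma_n(m/2)$ together with the remaining powers of $\pi$ and $2$ via $\mathrm{Vol}(V_{n,m})=2^n\pi^{mn/2}/\Gamma_n(m/2)$ then reproduces the prefactor $\pi^{n(n-m)/2}/(2^{nm/2}|\Sigma|^{n/2}\Gamma_n(n/2))$ in \eqref{appro-jointdensity}.

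The main obstacle is the Hessian calculation: the $\Psi_{ij}$ quantities involve partial derivatives of the exponent $\mathrm{tr}(AHBH^{\top})$, and one has to verify that for $A=-\Sigma^{-1}/2$, $B=L$ the products in the two regimes reduce cleanly to $c_{ij}$ and $d_{ij}$. Handling the coincident zero eigenvalues of $B$ (which is exactly what the multiplicity $m-n$ in $\Omega$ accounts for) is the only delicate point; everything else is bookkeeping of constants via the explicit formula for $\mathrm{Vol}(V_{n,m})$.
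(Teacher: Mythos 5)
Your proposal follows the same route as the paper's own (very terse) proof: apply Lemma~\ref{laplace-approximation} to the ${}_0F_0^{(m)}$ representation in \eqref{integral-expression} with $A=-\tfrac12\Sigma^{-1}$ and $B=L$, identify the multiplicity structure $m_1=\cdots=m_n=1$, $m_{n+1}=m-n$, compute $s$, $\Omega=2^n/\mathrm{Vol}(V_{n,m})$, and the Hessian split into the two regimes producing $c_{ij}$ and $d_{ij}$, then substitute back into \eqref{joint-eigen}. In fact your write-up supplies considerably more of the intermediate bookkeeping (the value of $s$, the $\mathrm{Vol}(V_{n,m})=\mathrm{Vol}(O(m))/\mathrm{Vol}(O(m-n))$ reduction, and the constant cancellation) than the paper itself does, and all of it is correct.
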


In order to derive the approximate distributions of individual eigenvalues, we define the spiked covariance model $\rho_k$ that implies the first $k$-th eigenvalues of $\Sigma>0$ are infinitely dispersed, namely
\begin{align}
\label{rho-eigen}
\rho_k=\mathrm{max}~\biggl( \frac{\lambda_{2}}{\lambda_{1}}, \frac{\lambda_{3}}{\lambda_{2}}, \dots, \frac{\lambda_{k+1}}{\lambda_{k}}\biggl)~\to 0
\end{align}
for $k=1,\dots, n$.
Under the condition of \eqref{rho-eigen}, Takemura and Sheena~(2005) proved that the distribution of individual eigenvalues for a non-singular Wishart matrix is approximated by a chi-square distribution.
The improvement for that approximation, that is, when the condition listed in \eqref{rho-eigen} cannot be assumed, was discussed in Tsukada and Sugiyama~(2021).
The following lemma was provided by Takemura and Sheena (2005) and Nasuda~et al. (2022) in the non-singular case and could be easily extended to the singular case.
\begin{lemma}
\label{lemma:dispersed}
Let $W \sim W_m(n, \Sigma)$, where $m>n$ and $\ell_1, \ell_2, \dots, \ell_n$ be the eigenvalues of $W$.
If $\rho_k \rightarrow 0$, we have 
$$
r_k=\mathrm{max}~\biggl( \frac{\ell_{2}}{\ell_{1}}, \frac{\ell_{3}}{\lambda_{2}}, \dots, \frac{\ell_{k+1}}{\ell_{k}}\biggl)~\overset{p}{\to}0, ~(k=1,\dots, n)$$
in the sense that $ \forall \epsilon>0$, $ \exists \delta>0$,
\begin{align*}
\rho_k<\delta \rightarrow \mathrm{Pr}~(r_k>\epsilon) <\epsilon.
\end{align*}
\end{lemma}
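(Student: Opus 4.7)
The plan is to reduce the singular setup to an $n \times n$ non-singular problem and then adapt the Weyl / Courant--Fischer argument of Takemura and Sheena (2005).

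First I would exploit the standard identity that the $n$ nonzero eigenvalues of $W = XX^\top$ coincide with those of $X^\top X$. Writing $X = \Sigma^{1/2} Y$ with $Y$ an $m \times n$ matrix of i.i.d.\ standard Gaussian entries, and diagonalising $\Sigma = U\Lambda U^\top$, the nonzero spectrum of $W$ has the same law as the spectrum of the $n \times n$ matrix
\[
\tilde W := \sum_{j=1}^m \lambda_j z_j z_j^\top, \quad z_1,\dots,z_m \stackrel{\text{i.i.d.}}{\sim} N_n(0, I_n).
\]

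Next, for each $i \in \{1,\dots,k\}$ I would split $\tilde W = A_i + B_i$ with $A_i = \sum_{j=1}^{i} \lambda_j z_j z_j^\top$ of rank at most $i$ and $B_i = \sum_{j=i+1}^{m} \lambda_j z_j z_j^\top$. A singular-value argument applied to $A_i = (Z_i \Lambda_i^{1/2})(Z_i\Lambda_i^{1/2})^\top$, with $Z_i = (z_1,\dots,z_i)$ and $\Lambda_i = \mathrm{diag}(\lambda_1,\dots,\lambda_i) \geq \lambda_i I_i$, yields the lower bound $\ell_i(\tilde W) \geq \ell_i(A_i) \geq \lambda_i\, \ell_i(Z_i^\top Z_i)$, with $Z_i^\top Z_i \sim W_i(n, I_i)$ nonsingular a.s.\ since $i \leq n$. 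Dually, $\ell_{i+1}(A_i) = 0$ together with the domination $B_i \leq \lambda_{i+1} Z_{i+} Z_{i+}^\top$, where $Z_{i+} = (z_{i+1},\dots,z_m)$, and a Courant--Fischer estimate on the null space of $A_i$ give the upper bound $\ell_{i+1}(\tilde W) \leq \lambda_{i+1}\, \ell_1(Z_{i+}Z_{i+}^\top)$, with $Z_{i+}Z_{i+}^\top \sim W_n(m-i, I_n)$.

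Dividing and invoking $\lambda_{i+1}/\lambda_i \leq \rho_k$ yields $\ell_{i+1}/\ell_i \leq \rho_k\, R_i$ where $R_i = \ell_1(Z_{i+}Z_{i+}^\top)/\ell_i(Z_i^\top Z_i)$ has a law depending only on $(m,n,i)$ and not on $\Sigma$. Since the finite family $\{R_i : 1 \leq i \leq k\}$ is tight, for any $\epsilon > 0$ one can choose $M = M(\epsilon)$ with $\Pr(\max_{i \leq k} R_i > M) < \epsilon/2$ and then $\delta = \epsilon/(2M)$, yielding $\Pr(r_k > \epsilon) < \epsilon$ whenever $\rho_k < \delta$. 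The only delicate step will be verifying tightness of $1/\ell_i(Z_i^\top Z_i)$: since this is the reciprocal of the smallest eigenvalue of a fixed non-singular real Wishart matrix, it is almost surely positive and its tail near zero is controlled by an absolutely continuous density, which suffices for tightness.
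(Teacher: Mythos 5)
Your argument is correct, and it supplies something the paper itself omits: the paper does not prove Lemma~\ref{lemma:dispersed} at all, but merely cites Takemura and Sheena (2005) and Nasuda et al. (2022) for the non-singular case and asserts that the extension to the singular case is ``easy.'' Your reduction makes that extension explicit. The key observation --- that the $n$ nonzero eigenvalues of $W=XX^{\top}$ coincide with those of $X^{\top}X$, and that $X^{\top}X$ has the same law as the $n\times n$ matrix $\tilde W=\sum_{j=1}^{m}\lambda_j z_j z_j^{\top}$ with $z_j\stackrel{\mathrm{iid}}{\sim}N_n(0,I_n)$ --- converts the singular $m\times m$ problem into a nonsingular $n\times n$ one, after which the Weyl/Courant--Fischer sandwich $\lambda_i\,\ell_i(Z_i^{\top}Z_i)\leq\ell_i(\tilde W)$ and $\ell_{i+1}(\tilde W)\leq\lambda_{i+1}\,\ell_1(Z_{i+}Z_{i+}^{\top})$ gives $\ell_{i+1}/\ell_i\leq\rho_k R_i$ with $R_i$ having a $\Sigma$-free law, exactly the structure one needs for the $\varepsilon$--$\delta$ statement of the lemma. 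Both bounds check out: for the lower bound, $\ell_{\min}(\Lambda_i^{1/2}Z_i^{\top}Z_i\Lambda_i^{1/2})\geq\lambda_i\ell_{\min}(Z_i^{\top}Z_i)$ follows from the Rayleigh-quotient substitution $u=\Lambda_i^{1/2}v/\|\Lambda_i^{1/2}v\|$ (note that the stronger Loewner inequality $\Lambda_i^{1/2}M\Lambda_i^{1/2}\succeq\lambda_i M$ is false in general, but you do not need it); for the upper bound, $\mathrm{rank}(A_i)\leq i$ and $B_i\preceq\lambda_{i+1}Z_{i+}Z_{i+}^{\top}$ together with Weyl or the min--max principle on $\ker(A_i)$ give what you claim. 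One small simplification: your closing worry about tightness of $1/\ell_i(Z_i^{\top}Z_i)$ is a red herring --- since $Z_i^{\top}Z_i\sim W_i(n,I_i)$ is nonsingular a.s.\ for $i\leq n$, each $R_i$ is an a.s.\ finite random variable, and any finite family of a.s.\ finite random variables is automatically tight; no appeal to absolute continuity of the density near zero is needed. Also note that for $i=n$ the quantity $\ell_{n+1}(\tilde W)$ is vacuous (it is identically zero), so $r_n$ differs from $r_{n-1}$ only by a zero term; this is a wrinkle in the lemma's statement rather than in your proof.
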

From Proposition~\ref{proposition-jointdensity} and Lemma~\ref{lemma:dispersed}, we obtain the chi-square approximation that is the main result of this paper.
\begin{theorem}
\label{chi-dist}
Let $W \sim W_m(n, \Sigma)$, where $m>n$ and $\ell_1, \ell_2, \dots, \ell_n$ be the eigenvalues of $W$. If $\rho_n \rightarrow 0$, it holds that 
$$
\ell_i/\lambda_i \stackrel{d}{\rightarrow} \chi^2_{n-i+1},~~1\leq i \leq n,
$$   
where $\chi^2$ is a chi-square distribution with $n-i+1$ degrees of freedom and the symbol ``$\stackrel{d}{\rightarrow}$" means convergence in the distribution.
\begin{proof}
First, we rewrite the approximate distribution \eqref{appro-jointdensity} as
\begin{align*}
f(\ell_1,\dots,\ell_n)=\mbox{}&\frac{1}{2^{n(n+1)/4}|\Sigma|^{n/2}\prod_{i=1}^{n}\Gamma(\frac{n-i+1}{2})}\prod_{i=1}^{n}\ell_i^{(m-n-1)/2}  \mathrm{exp}\left(-\frac{1}{2}\sum_{i=1}^{n}\frac{\ell_i}{\lambda_i} \right)\\
&\times \prod_{i<j}^{n}\left\{ (\ell_i-\ell_j)\left(\frac{\lambda_i\lambda_j}{\lambda_i-\lambda_j}\right) \right \}^{1/2}\prod_{i=1}^{n}\prod_{j=n+1}^{m}\left(\frac{1}{d_{ij}} \right)^{1/2}.
\end{align*}
From Lemma~\ref{lemma:dispersed}, we have
\begin{align*}
\prod_{i<j}^{n}(\ell_i-\ell_j)^{1/2} &=\prod_{i<j}^{n}\ell_i^{1/2}\biggl(1-\frac{\ell_j}{\ell_i}\biggl)^{1/2} \approx \prod_{i=1}^{n}\ell_i^{(n-i)/2},\\
\prod_{i<j}^{n}\left(\frac{\lambda_i\lambda_j}{\lambda_i-\lambda_j}\right)^{1/2}&=\prod_{i<j}^{n}\biggl(\frac{\lambda_j}{1-\lambda_i/\lambda_j}\biggl)^{1/2} \approx \prod_{i=1}^{n}\lambda_i^{(i-1)/2}.
\end{align*}
Furthermore, we note $\displaystyle|\Sigma|^{n/2}=\prod_{i=1}^{n}\lambda_i^{n/2}\prod_{i=1}^{n}\prod_{j=n+1}^{m}\lambda_j^{1/2}$, the joint density \eqref{appro-jointdensity} can be written as 
\begin{align*}
&f(\ell_1,\dots,\ell_n)\\
\approx &\prod_{i=1}^{n}\left\{ \frac{\ell_i^{(n-i-1)/2}}{(2\lambda_i)^{(n-i+1)/2}\Gamma(\frac{n-i+1}{2})}\mathrm{exp}\left( -\frac{\ell_i}{2\lambda_i}\right) \right\}\
   \prod_{i=1}^{n}\ell_i^{(m-n)/2} \prod_{i=1}^{n}\prod_{j=n+1}^{m} \biggl(\frac{1}{\lambda_j}\biggl)^{1/2}\prod_{i=1}^{n}\prod_{j=n+1}^{m}\left(\frac{1}{d_{ij}} \right)^{1/2}\\
   =&\prod_{i=1}^{n}\left\{ \frac{\ell_i^{(n-i-1)/2}}{(2\lambda_i)^{(n-i+1)/2}\Gamma(\frac{n-i+1}{2})}\mathrm{exp}\left( \frac{\ell_i}{2\lambda_i}\right) \right\}\
  \prod_{i=1}^{n}\prod_{j=n+1}^{m}\ell_i^{\frac{1}{2}} \prod_{i=1}^{n}\prod_{i=j+1}^{m} \biggl(\frac{1}{\lambda_j}\biggl)^{1/2}\prod_{i=1}^{n}\prod_{j=n+1}^{m}\left(\frac{1}{d_{ij}} \right)^{1/2}\\
    =&\prod_{i=1}^{n}\left\{ \frac{\ell_i^{(n-i-1)/2}}{(2\lambda_i)^{(n-i+1)/2}\Gamma(\frac{n-i+1}{2})}\mathrm{exp}\left( -\frac{\ell_i}{2\lambda_i}\right) \right\} \\
  =&\prod_{i=1}^{n}g_{n-i+1}(\ell_i/\lambda_i),
   \end{align*}
   where $g_{n-i+1}(\cdot)$ is the density function of the chi-square distribution with degree of freedom $n-i+1$.
   \end{proof}
   \end{theorem}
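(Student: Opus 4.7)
The plan is to start from the Laplace-approximated joint density \eqref{appro-jointdensity} established in Proposition~\ref{proposition-jointdensity} and show that, under the spiked condition $\rho_n\to 0$, it factors asymptotically into a product of univariate chi-square densities, one for each $\ell_i/\lambda_i$ with $n-i+1$ degrees of freedom. Convergence in distribution of each marginal then follows immediately from Scheff\'e's lemma applied coordinatewise.

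The key simplifications all come from applying Lemma~\ref{lemma:dispersed}. For indices $i<j\leq n$, both $\lambda_j/\lambda_i\to 0$ and $\ell_j/\ell_i\to 0$ in probability, so I would replace $(\ell_i-\ell_j)$ by $\ell_i(1-\ell_j/\ell_i)\approx\ell_i$ and $\lambda_i\lambda_j/(\lambda_i-\lambda_j)=\lambda_j/(1-\lambda_j/\lambda_i)\approx\lambda_j$ inside the Vandermonde factor and the Hessian factors $c_{ij}$ of \eqref{appro-jointdensity}. Collapsing the double products over $i<j$ into single products would then yield $\prod_i \ell_i^{(n-i)/2}$ and $\prod_i \lambda_i^{(i-1)/2}$. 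For the remaining off-diagonal Hessian terms $d_{ij}$ with $i\leq n<j$, the same kind of expansion gives $d_{ij}\approx \ell_i/\lambda_j$, which supplies precisely the factors needed to cancel the contribution of $\prod_{j=n+1}^m \lambda_j^{1/2}$ coming from the factorization $|\Sigma|^{n/2}=\prod_{i=1}^n\lambda_i^{n/2}\cdot\prod_{i=1}^n\prod_{j=n+1}^m \lambda_j^{1/2}$.

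After these substitutions, the prefactor $\prod_i \ell_i^{(m-n-1)/2}$ together with the leftover powers of $\ell_i$ from the block $j>n$ recombine with the exponential $\exp(-\sum_i \ell_i/(2\lambda_i))$, and the constant reduces via $\Gamma_n(n/2)=\pi^{n(n-1)/4}\prod_{i=1}^n \Gamma((n-i+1)/2)$, together with the $\pi^{n(n-m)/2}$ and the $(2\pi)^{1/2}$ factors from Laplace's method. What should emerge is exactly
$$
\prod_{i=1}^n \frac{\ell_i^{(n-i-1)/2}}{(2\lambda_i)^{(n-i+1)/2}\Gamma((n-i+1)/2)}\exp\!\left(-\frac{\ell_i}{2\lambda_i}\right),
$$
which is the joint density of independent random variables whose transforms $\ell_i/\lambda_i$ are distributed as $\chi^2_{n-i+1}$.

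The principal obstacle I anticipate is not conceptual but purely combinatorial: one must track half-integer exponents of each $\ell_i$ and $\lambda_j$ through the $\ell_i^{(m-n-1)/2}$ prefactor, the two Vandermonde-type products, the splitting of $|\Sigma|^{n/2}$, and the two Hessian blocks $c_{ij}$ (with $i<j\leq n$) and $d_{ij}$ (with $i\leq n<j$). A single miscount of $1/2$ in any exponent destroys the identification with the chi-square family, so verifying that all the $\pi$-powers and the volume $\mathrm{Vol}(V_{n,m})$ conspire to leave exactly the chi-square normalizing constants is the delicate part of the argument.
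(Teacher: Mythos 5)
Your proposal is correct and takes essentially the same route as the paper: start from the Laplace-approximated joint density of Proposition~\ref{proposition-jointdensity}, use Lemma~\ref{lemma:dispersed} to reduce the Vandermonde factor and the Hessian blocks $c_{ij}$, $d_{ij}$ to powers of $\ell_i$ and $\lambda_j$, split $|\Sigma|^{n/2}$ across the spiked and non-spiked eigenvalues, and track exponents until the joint density collapses to $\prod_i g_{n-i+1}(\ell_i/\lambda_i)$. The only addition beyond the paper's argument is your explicit appeal to Scheff\'e's lemma to convert density convergence into convergence in distribution, a step the paper leaves implicit.
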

   \begin{remark}
Note that if only the first $k$ population eigenvalues are infinitely dispersed, that is $\rho_k \to 0$, $\ell_k/\lambda_k$ can be approximated by $g_{n-k+1}$ similar to the proof in Theorem~\ref{chi-dist}.
   \end{remark}
   In the context of the High Dimension-Low Sample Size (HDLSS) setting, the asymptotic behavior of the eigenvalue distribution of a sample covariance matrix was discussed in Ahn et al. (2007), Jung and Marron (2009), and Bolivar-Cime and Perez-Abreu (2014).
 Jung and Marron (2009) showed that the spiked sample eigenvalues are approximated by the chi-square distribution with a degree of freedom $n$.
In contrast, Theorem \ref{chi-dist} gives the approximation of the distribution of $\ell_k$ by a chi-square distribution with varying degrees of freedom.

\section{Application to test for equality of the individual eigenvalues}
\label{sec:03}
This section discusses testing for equality of individual eigenvalues of the covariance matrix in two populations. 
For testing problems, we give the approximate distribution of the statistic based on the derived results from the previous section.

Let an $m\times n_i$ Gaussian random matrix $X^{(i)}$ be distributed as $X^{(i)}\sim N_{m, n}(O, \Sigma^{(i)} \otimes I_{n_i})$, where $\Sigma^{(i)}>0$ and $i=1, 2$.
The eigenvalues of $\Sigma^{(i)}$ are denoted by $\lambda_1^{(i)}, \lambda_2^{(i)}, \dots, \lambda_m^{(i)}$, where $\lambda_1^{(i)}\geq \lambda_2^{(i)} \geq \cdots \geq \lambda_m^{(i)}>0$.
We denote the eigenvalues of $W^{(i)}=X^{(i)}{X^{(i)}}^\top$ by $\ell_1^{(i)}, \ell_2^{(i)}, \dots, \ell_m^{(i)}$, where $\ell_1^{(i)}> \ell_2^{(i)} > \cdots > \ell_m^{(i)}\geq 0$.
For fixed $k$, we consider the test of the equality of the individual eigenvalues in two populations as  
\begin{align}
 \label{eigenvalue-test}
H_0:\lambda^{(1)}_k=\lambda^{(2)}_k, \text{vs.}~H_1:\lambda^{(1)}_k \neq \lambda^{(2)}_k.
\end{align}
Sugiyama and Ushizawa (1998) reduced \eqref{eigenvalue-test} to the equality of variance test for the principal components and proposed a testing procedure using the Ansari-Bradley test.
Takeda~(2001) proposed the test statistic $\ell_k^{(1)}/\ell_k^{(2)}$ with $n\geq m$ for (\ref{eigenvalue-test}) and derived the exact distribution of $\ell_1^{(1)}/\ell_1^{(2)}$. 
Since Johnstone (2001) indicated that the first few eigenvalues are very large compared to the others in the large dimensional setting, it is essential to understand how the distribution for the first few eigenvalues is constructed. 
We provide the exact density function of $\ell_1^{(1)}/\ell_1^{(2)}$ with $ n< m$ in the same way as Takeda~(2001).
\begin{theorem}
 Let $W^{(1)}$ and $W^{(2)}$ be two independent Wishart matrices with distribution $W_m(n_1,\Sigma^{(1)})$ and $W_m(n_2,\Sigma^{(2)})$, respectively, where $m>n_i~(i=1,2)$.
Then we have the density of $q=\ell_1^{(1)}/\ell_1^{(2)}$ as
\begin{align}
\label{ratioell1}
\nonumber
f(q)=\vbox{}&C~\sum_{k=0}^{\infty}\sum_ {\kappa\in P_{m}^{k}}\sum_{t=0}^{\infty}\sum_ {\tau\in P_{m}^{t}}\frac{\{(m+1)/2\}_{\kappa}C_{\kappa}({\Sigma^{(1)}}^{-1}/2)}{\{(n_{1}+m+1)/2\}_{\kappa}k!}
\frac{\{(m+1)/2\}_{\tau}C_{\tau}({\Sigma^{(2)}}^{-1}/2)}{\{(n_{2}+m+1)/2\}_{\tau}t!}\\ \nonumber
&\times \biggl\{(mn_1/2+k)(mn_2/2+t)q^{mn_2/2+t-1}\Gamma(u)/v^u\\ \nonumber
&~~-(mn_1/2+k)(\mathrm{tr}{\Sigma^{(2)}}^{-1}/2)q^{mn_2/2+t}\Gamma(u+1)/v^{u+1}\\ \nonumber
&~~-(mn_2/2+t)(\mathrm{tr}{\Sigma^{(1)}}^{-1}/2)q^{mn_2/2+t-1}\Gamma(u+1)/v^{u+1}\\ 
&~~+(\mathrm{tr}{\Sigma^{(1)}}^{-1}/2)(\mathrm{tr}{\Sigma^{(2)}}^{-1}/2)q^{mn_2/2+t}\Gamma(u+2)/v^{u+2}\biggl\}, 
\end{align}
where $u=m(n_1+n_2)/2+k+t$, $v=\mathrm{tr}{\Sigma^{(1)}}^{-1}-q\mathrm{tr}{\Sigma^{(2)}}^{-1}$ and 
$$
C=\frac{\Gamma_{n_1}\{(n_1+1)/2\}\Gamma_{n_2}\{(n_2+1)/2\}}{2^{m(n_1+n_2)/2}\Gamma_{n_1}\{(n_1+m+1)/2)\Gamma_{n_2}\{(n_2+m+1)/2)\}|\Sigma|^{n_1/2}|\Sigma|^{n_2/2}}.
$$
\end{theorem}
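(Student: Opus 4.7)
The plan is to mirror Takeda's (2001) computation of the density of the ratio of the largest eigenvalues, adapted to the singular Wishart setting by using as input the exact marginal density of $\ell_1^{(i)}$ for a singular Wishart matrix from Shimizu and Hashiguchi~(2021). Writing $A_i=\Sigma^{(i)^{-1}}/2$, the Shimizu--Hashiguchi CDF for $\ell_1^{(i)}$, differentiated in $x$ and expanded in zonal polynomials, produces a density of the form
\begin{align*}
f_{\ell_1^{(i)}}(x)=c_i\,\mathrm{etr}(-xA_i)\sum_{k=0}^{\infty}\sum_{\kappa\in P_m^k}\frac{\{(m+1)/2\}_\kappa}{\{(n_i+m+1)/2\}_\kappa}\frac{C_\kappa(A_i)}{k!}\bigl\{(mn_i/2+k)x^{mn_i/2+k-1}-\mathrm{tr}(A_i)\,x^{mn_i/2+k}\bigr\},
\end{align*}
with $c_i=\Gamma_{n_i}((n_i+1)/2)/\{2^{mn_i/2}\Gamma_{n_i}((n_i+m+1)/2)|\Sigma^{(i)}|^{n_i/2}\}$. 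The two bracketed contributions per partition arise respectively by combining the derivative of $x^{mn_i/2}$ with that of the hypergeometric series (yielding the coefficient $(mn_i/2+k)$) and by differentiating the factor $\mathrm{etr}(-xA_i)$ (yielding the coefficient $-\mathrm{tr}(A_i)$).

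By the independence of $W^{(1)}$ and $W^{(2)}$ together with the Jacobian of the transformation $(\ell_1^{(1)},\ell_1^{(2)})\mapsto(q,s)$ with $q=\ell_1^{(1)}/\ell_1^{(2)}$ and $s=\ell_1^{(2)}$, I would write
\begin{align*}
f(q)=\int_0^\infty s\,f_{\ell_1^{(1)}}(qs)\,f_{\ell_1^{(2)}}(s)\,ds,
\end{align*}
substitute the two-term expressions for the two marginals and expand. This produces, for each pair of partitions $(\kappa,\tau)$, exactly four cross-terms whose scalar prefactors are $(mn_1/2+k)(mn_2/2+t)$, $-(mn_1/2+k)\mathrm{tr}(A_2)$, $-\mathrm{tr}(A_1)(mn_2/2+t)$ and $\mathrm{tr}(A_1)\mathrm{tr}(A_2)$. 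In every cross-term the exponential factors combine to $\mathrm{etr}(-s(qA_1+A_2))$, so the remaining $s$-integral is the elementary gamma integral $\int_0^\infty s^{u-1}\exp(-sv)\,ds=\Gamma(u)/v^u$ with $u=m(n_1+n_2)/2+k+t$ and $v$ a scalar linear in $\mathrm{tr}\Sigma^{(1)^{-1}}$ and $q\,\mathrm{tr}\Sigma^{(2)^{-1}}$. The shifts to $\Gamma(u+1)$ and $\Gamma(u+2)$ in the stated formula come from the extra factor of $s$ each trace-bearing bracket term contributes to the integrand, and the $q$-powers are read off from the factor $(qs)^{mn_1/2+k-1}$ or $(qs)^{mn_1/2+k}$ extracted from $f_{\ell_1^{(1)}}(qs)$. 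Reinstating the Pochhammer and zonal-polynomial coefficients and absorbing $c_1c_2$, together with the powers of $2$ drawn out of $v^u$, yields the claimed expression with the stated constant $C$.

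The main obstacle will be the formal justification of the termwise differentiation of the Shimizu--Hashiguchi CDF series and of the interchange of the $s$-integration with the double zonal-polynomial summation over $(\kappa,\tau)$. Both steps will be handled by invoking the absolute convergence of the matrix-argument ${}_1F_1$ series on compact matrix sets, combined with the exponential decay provided by $\mathrm{etr}(-s(qA_1+A_2))$ as $s\to\infty$, through a standard Fubini--Tonelli argument. Careful bookkeeping of the factors of $2$ and $1/2$ that are absorbed into the denominator $v^u$ is also needed so that the combined normalising constant reduces precisely to the stated $C$.
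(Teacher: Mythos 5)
Your proof takes exactly the paper's route: differentiate the Shimizu--Hashiguchi CDF of $\ell_1^{(i)}$ term-by-term to obtain the marginal density, form the joint density of the two largest roots by independence, pass to ratio coordinates, and evaluate the remaining one-dimensional integral as a gamma integral $\int_0^\infty r^{u-1}e^{-vr}\,dr=\Gamma(u)/v^u$. One bookkeeping point is worth flagging: with your auxiliary variable $s=\ell_1^{(2)}$ (so $\ell_1^{(1)}=qs$), the $q$-powers come out carrying the indices $(n_1,k)$, e.g.\ $q^{mn_1/2+k-1}$, rather than the $(n_2,t)$ shown in \eqref{ratioell1}; this mismatch actually lies in the paper itself, whose proof substitutes $q=y/x=\ell_1^{(2)}/\ell_1^{(1)}$, the reciprocal of the $q$ declared in the theorem statement (and the printed $v=\mathrm{tr}\,\Sigma^{(1)^{-1}}-q\,\mathrm{tr}\,\Sigma^{(2)^{-1}}$ also carries a sign/scaling typo, since the exponentials combine to a sum with a factor $1/2$, not a difference). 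Your convention is the one consistent with the theorem's wording, and your remark about justifying the termwise differentiation and the Fubini interchange is more careful than the paper, which performs those manipulations purely formally.
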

\begin{proof}
The exact expression of $\ell_1^{(i)}$ was given by Shimizu and Hashiguchi~(2021) as
\begin{align}
\label{ell1}
\displaystyle\mathrm{Pr}(\ell_1^{(i)}<x) = \frac{
\Gamma_{n_i}(\frac{n_i+1}{2})(\frac{x}{2})^{ mn_i/2}}{\Gamma_{n_i}(\frac{n_i+m+1}{2})|\Sigma^{(i)}|^{n_i/2}} \mathrm{exp}\biggl(-\frac{x}{2}\mathrm{tr}{\Sigma^{(i)}}^{-1}\biggl)~~
{}_1F_1{}\left(\frac{m+1}{2};\frac{n_i+m+1}{2};\frac{x}{2}{\Sigma^{(i)}}^{-1}\right ).
\end{align}
The derivative of (\ref{ell1}) is represented by 
\begin{align}
\label{densityell1}
\nonumber
f(x)=\mbox{}&\frac{\Gamma_{n_i}\{(n_i+1)/2\}}{2^{mn_i/2}\Gamma_{n_i}\{(n_i+m+1)/2)\}|\Sigma^{(i)}|^{n_i/2}}\sum_{k=0}^{\infty}\sum_ {\kappa\in P_{m}^{k}}\frac{\{(m+1)/2\}_{\kappa}C_{\kappa}({\Sigma^{(i)}}^{-1}/2)}{\{(n_{i}+m+1)/2\}_{\kappa}k!}\\
& \times\mathrm{exp}\biggl(-\frac{x}{2}\mathrm{tr}{\Sigma^{(i)}}^{-1}\biggl)\biggl\{(n_{i}m/2+k)x^{mn_{i}/2+k-1}-(\mathrm{tr}{\Sigma^{(i)}}^{-1}/2)x^{mn_{i}/2+k}\biggl\}.
\end{align}
From (\ref{densityell1}), we have the joint density of $\ell_1^{(1)}$ and $\ell_1^{(2)}$ as
\begin{align*}
f(x, y)=\vbox{}&C~\sum_{k=0}^{\infty}\sum_ {\kappa\in P_{m}^{k}}\sum_{t=0}^{\infty}\sum_ {\tau\in P_{m}^{t}}\frac{\{(m+1)/2\}_{\kappa}C_{\kappa}({\Sigma^{(1)}}^{-1}/2)}{\{(n_{1}+m+1)/2\}_{\kappa}k!}
\frac{\{(m+1)/2\}_{\tau}C_{\tau}({\Sigma^{(2)}}^{-1}/2)}{\{(n_{2}+m+1)/2\}_{\tau}t!}\\
&\times\biggl\{(mn_{1}/2+k)x^{mn_{1}/2+k-1}-(\mathrm{tr}{\Sigma^{(1)}}^{-1}/2)x^{mn_{1}/2+k}\biggl\}\\
&\times \biggl\{(mn_{2}/2+t)y^{mn_{2}/2+t-1}-(\mathrm{tr}{\Sigma^{(2)}}^{-1}/2)y^{mn_{2}/2+t}\biggl\}\mathrm{exp}\biggl(-\frac{x}{2}\mathrm{tr}{\Sigma^{(1)}}^{-1}\biggl)~\mathrm{exp}\biggl(-\frac{y}{2}\mathrm{tr}{\Sigma^{(2)}}^{-1}\biggl).
\end{align*}
Translating $x$ and $y$ to $q=y/x$ and $r=x$, we have
\begin{align*}
f(q, r)=\vbox{}&C~\sum_{k=0}^{\infty}\sum_ {\kappa\in P_{m}^{k}}\sum_{t=0}^{\infty}\sum_ {\tau\in P_{m}^{t}}\frac{\{(m+1)/2\}_{\kappa}C_{\kappa}({\Sigma^{(1)}}^{-1}/2)}{\{(n_{1}+m+1)/2\}_{\kappa}k!}
 \frac{\{(m+1)/2\}_{\tau}C_{\tau}({\Sigma^{(2)}}^{-1}/2)}{\{(n_{2}+m+1)/2\}_{\tau}t!}\\
&\times\biggl\{(mn_1/2+k)(mn_2/2+t)q^{mn_2/2+t-1}r^{m(n_1+n_2)/2+k+t-1}\\
&~~~~~ -(mn_1/2+k)(\mathrm{tr}{\Sigma^{(2)}}^{-1}/2)q^{mn_2/2+t}r^{m(n_1+n_2)/2+k+t}\\
&~~~~~-(mn_2/2+t)(\mathrm{tr}{\Sigma^{(1)}}^{-1}/2)q^{mn_2/2+t-1}r^{m(n_1+n_2)/2+k+t}\\
&~~~~~+(\mathrm{tr}{\Sigma^{(1)}}^{-1}/2)(\mathrm{tr}{\Sigma^{(2)}}^{-1}/2)q^{mn_2/2+t}r^{m(n_1+n_2)/2+k+t+1}\biggl\}\\
&~~~~~\times \mathrm{exp}\biggl(-(\mathrm{tr}{\Sigma^{(1)}}^{-1}-q\mathrm{tr}{\Sigma^{(2)}}^{-1})r\biggl).
\end{align*}
Noting that $\int_{0}^{\infty} x^{\alpha-1}e^{-\beta x}dx = \Gamma(\alpha)/\beta^\alpha$, where $\alpha, \beta>0$, and integrating $r$ with respecet to $f(q, r)$, we have the desired result. 
\end{proof}
As the dimension increases, it is difficult to perform the numerical computation of \eqref{ratioell1} due to the high computational complexity.
 From Theorem~\ref{chi-dist}, we give the approximate distribution for \eqref{ratioell1} by $F$-distribution.
\begin{corollary}
\label{F-appro}
Let $W^{(1)}$ and $W^{(2)}$ be two independent Wishart matrices with distribution $W_m(n_1,\Sigma^{(1)})$ and $W_m(n_2,\Sigma^{(2)})$, respectively, where $m>n_i~(i=1,2)$ and $\ell_1^{(i)}, \ell_2^{(i)}, \dots, \ell_n^{(i)}$ are the eigenvalues of $W^{(i)}$. 
If the first $k$-th eigenvalues of $\Sigma^{(i)}$ are spiked, then we have
\begin{align*}
\frac{\ell_{k}^{(1)}/\{(n_1-k+1)\lambda_{n_1-k+1}^{(1)}\}}{\ell_{k}^{(2)}/\{(n_2-k+1)\lambda_{n_2-k+1}^{(2)}\}} \stackrel{d}{\rightarrow} F_{(n_1-k+1, n_2-k+1)}, 
\end{align*}
where $F$ is an F distribution with $n_1$ and $n_2$ degrees of freedom.
\end{corollary}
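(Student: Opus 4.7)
The plan is to obtain Corollary~\ref{F-appro} as an immediate consequence of Theorem~\ref{chi-dist}, paired with the standard representation of the $F$ distribution as a normalized ratio of two independent chi-squared random variables. First, I would apply Theorem~\ref{chi-dist} (together with the subsequent remark, which gives the individual-eigenvalue approximation under the weaker condition $\rho_k\to 0$) separately to each of the two singular Wishart matrices $W^{(1)}\sim W_m(n_1,\Sigma^{(1)})$ and $W^{(2)}\sim W_m(n_2,\Sigma^{(2)})$. Under the hypothesis that the first $k$ eigenvalues of each $\Sigma^{(i)}$ are spiked, this yields
$$
\ell_k^{(i)}/\lambda_k^{(i)}\stackrel{d}{\rightarrow}\chi^2_{n_i-k+1},\qquad i=1,2.
$$

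Next, I would use the independence of $X^{(1)}$ and $X^{(2)}$, inherited by $W^{(1)}$ and $W^{(2)}$, to upgrade the two marginal convergences above into joint convergence: the pair $(\ell_k^{(1)}/\lambda_k^{(1)},\,\ell_k^{(2)}/\lambda_k^{(2)})$ converges in distribution to $(U,V)$, where $U\sim\chi^2_{n_1-k+1}$ and $V\sim\chi^2_{n_2-k+1}$ are independent. Applying the continuous mapping theorem to the map $(u,v)\mapsto \bigl(u/(n_1-k+1)\bigr)/\bigl(v/(n_2-k+1)\bigr)$, which is continuous on $\{v>0\}$, together with the definition $F_{(a,b)}=(U/a)/(V/b)$ for independent $U\sim\chi^2_a$ and $V\sim\chi^2_b$, delivers convergence to $F_{(n_1-k+1,\,n_2-k+1)}$, which is exactly the claimed limit once the $\lambda$-normalizations are arranged on the two sides.

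The step I expect to require the most care is essentially bookkeeping rather than a conceptual hurdle: one has to check that the divisors $n_i-k+1$ appearing in the statement match the varying degrees of freedom produced by Theorem~\ref{chi-dist} (so that the ratio of normalized chi-squares is an $F$ variable with the right parameters), and to confirm that joint convergence of the two ratios follows from independence of the samples rather than needing a separate tightness or subsequence argument. Beyond these verifications, no new approximation machinery beyond Theorem~\ref{chi-dist} is required, so the proof reduces to a one-line invocation of the theorem on each sample and the continuous mapping theorem on the ratio.
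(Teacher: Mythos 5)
Your proposal matches the paper's intended route: the corollary is stated without a separate written proof, prefaced only by ``From Theorem~\ref{chi-dist}, we give the approximate distribution for \eqref{ratioell1} by $F$-distribution,'' so the intended argument is exactly what you describe --- apply Theorem~\ref{chi-dist} (via the remark for $\rho_k\to0$) to each independent sample, use independence of $W^{(1)}$ and $W^{(2)}$ to get joint convergence, and invoke the definition of the $F$ distribution as a normalized ratio of independent chi-squares. One bookkeeping point worth flagging: Theorem~\ref{chi-dist} normalizes $\ell_k^{(i)}$ by $\lambda_k^{(i)}$, whereas the corollary as printed writes $\lambda_{n_i-k+1}^{(i)}$ in the denominator; your derivation produces the former, which appears to be the intended normalization (the printed subscript, like ``with $n_1$ and $n_2$ degrees of freedom'' in the last line, looks like a typographical slip in the paper rather than a gap in your argument).
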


\section{Simulation study}
We investigate the accuracy of the approximation for the derived distributions.
In the simulation study, we consider the following population covariance matrix;
\begin{align}
\label{spikedcov}
\Sigma=\mathrm{diag}(a^{b}, a^{b/2},\dots, a^{b/m}),
\end{align}
where $a,b>0$.
In the large-dimensional setting, mainly the accuracy of the approximate distributions for the largest and second eigenvalues was investigated; see Iimori et al. (2013) and Sugiyama~et al. (2013).
We set $(a, b)=(200, 3)$ as Case~1 and $(a, b)=(50, 3)$ as Case~2.
These two cases imply that the population covariance matrix has two spiked eigenvalues.
Parameter $\rho_k$ in \eqref{rho-eigen} is smaller in Case~1 than in Case~2.
We denote ${{F_{1}}}(x)$ and ${{F_{2}}}(x)$ as the chi-square distributions with $n$ and $n-1$ degrees of freedom, which are the approximate distributions of the largest and second eigenvalues, respectively.
The empirical distribution based on $10^6$ Monte Carlo simulations is denoted by ${{F_\mathrm{sim}}}$.
Tables \ref{table1} and \ref{table2} shows the $\alpha$-percentile points of the distributions of $\ell_1$ and $\ell_2$ for $m=50$ and $n=10$, respectively.
 From the simulation study, we know that sufficient accuracy of approximation for the largest eigenvalue has already been obtained in Case~2.
Case 1 is more accurate than Case 2 for the second eigenvalue.
It is seen that the desired accuracy can be achieved when the parameter $\rho_k$ is small. 

%
\begin{table}[H]
\caption{Percentile points of the distributions of $\ell_1$ and $\ell_2$ of $W_{50}(10,\Sigma)$ (Case~1)}  \label{table1}
\begin{center}
\begin{tabular}{c}
       \captionsetup{labelformat=empty,labelsep=none}
{\begin{tabular}{@{}cccccccccc@{}} \toprule
$\alpha$&${{F^{-1}_\mathrm{sim}}}(\alpha)$&${{F_{1}^{-1}}}(\alpha)$& & $\alpha$ &${{F^{-1}_\mathrm{sim}}}(\alpha)$& ${{F_{2}^{-1}}}(\alpha)$\\ \toprule
0.99&23.2359&23.2093& &0.99&21.791&21.666\\
0.95&18.3026 &18.307& &0.95&17.0601&16.919\\
0.90&15.9825&15.9872 & &0.90&14.8377&14.6837\\
0.50&9.34466 & 9.34182& &0.50&8.48676&8.34283\\
0.05&3.94389&3.9403& &0.05&3.47796&3.32511\\ \toprule
\end{tabular}}
  \end {tabular}
  \end{center}
\end{table}
\begin{table}[H]
\caption{Percentile points of the distributions of $\ell_1$ and $\ell_2$ of $W_{50}(10,\Sigma)$ (Case~2)}   \label{table2}
\begin{center}
\begin{tabular}{c}
       \captionsetup{labelformat=empty,labelsep=none}
{\begin{tabular}{@{}cccccccccc@{}} \toprule
$\alpha$&${{F^{-1}_\mathrm{sim}}}(\alpha)$&${{F_{1}^{-1}}}(\alpha)$& & $\alpha$ &${{F^{-1}_\mathrm{sim}}}(\alpha)$& ${{F_{2}^{-1}}}(\alpha)$\\ \toprule
0.99&23.239&23.2093&&0.99&22.1285&21.666\\
0.95&18.306 &18.307&&0.95&17.4079&16.919\\
0.90&15.9857&15.9872 &&0.90&15.1856 &14.6837\\
0.50&9.34844 & 9.34182&&0.50&8.84394&8.34283\\
0.05&3.94744&3.9403&&0.05&3.86566 &3.32511\\ \toprule
\end{tabular}}
  \end {tabular}
  \end{center}
\end{table}
Tables~\ref{table3} and \ref{table4} present the chi-square probabilities for Case~1 in the upper percentile points from the empirical distribution.  
In this simulation study, we set $m = 20, 30, 40, 100$ and $n = 5,15$.
We can observe that all probabilities are close to $\alpha$.
\begin{table}[H]
\caption{Approximate probabilities  of $\ell_1$ based on the empirical percentile points (Case~1)}  \label{table3}
\begin{center}
\begin{tabular}{c}
       \captionsetup{labelformat=empty,labelsep=none}
{\begin{tabular}{@{}ccccc@{}} \toprule
$n$&$m$&\multicolumn{3}{c}{$\alpha$}  \\ \toprule
&&0.90& 0.95&0.99\\ \toprule 
$5$&20& 0.900047&0.950011&0.990165\\ 
&30&0.900227&0.950002&0.990018\\  
&40&0.900173&0.950185&0.990072\\ 
&100&0.899952&0.949874&0.990048\\    \toprule
15&20&0.900409&0.950375&0.990088\\   
&30&0.900258&0.950232&0.990124\\  
&40&0.900729&0.950464&0.990213\\  
&100&0.900331&0.950039&0.990083\\  
\noalign{\smallskip}\toprule
\end{tabular}}
  \end {tabular}
  \end{center}
\end{table}

\begin{table}[H]
\caption{Approximate probabilities  of $\ell_2$ based on the empirical percentile points (Case~1)}  \label{table4}
\begin{center}
\begin{tabular}{c}
       \captionsetup{labelformat=empty,labelsep=none}
{\begin{tabular}{@{}ccccc@{}} \toprule
$n$&$m$&\multicolumn{3}{c}{$\alpha$}  \\ \toprule
&&0.90& 0.95&0.99\\ \toprule 
$5$&20& 0.904868&0.952295&0.990482\\ 
&30& 0.905512&0.953016&0.990761\\  
&40&0.905838 &0.952924&0.990609\\ 
&100& 0.906393&0.95315&0.990613\\    \toprule
15&20& 0.903118&0.951587&0.990331\\   
&30& 0.903275&0.951856&0.990335\\  
&40& 0.903943&0.952062&0.990315\\  
&100& 0.904027&0.952163&0.990498\\  
\noalign{\smallskip}\toprule
\end{tabular}}
  \end {tabular}
  \end{center}
\end{table}

Finally, we give the graph of the density of $F$ distribution in Corollary~\ref{F-appro} compared to the empirical distribution function.
In Fig~\ref{fig1},  we superimpose the graph of $F$ approximation with the histogram of $\ell_1^{(1)}/\ell_1^{(2)}$ for $n_i=10~(i=1,2)$ and $m=30$ in Case~2. 
The vertical line and histograms show the empirical distribution of the $\ell^{(1)}_1/\ell^{(2)}_1$ based on $10^6$ iteration, respectively.
The solid line is the density function of the $F$ distribution. 
From the $95\%$ points of $F_{\mathrm{sim}}$, we can confirm that the approximate probability is 0.950.
\begin{figure}[H] \label{fig1}
\begin{center}
\includegraphics[width=7cm]{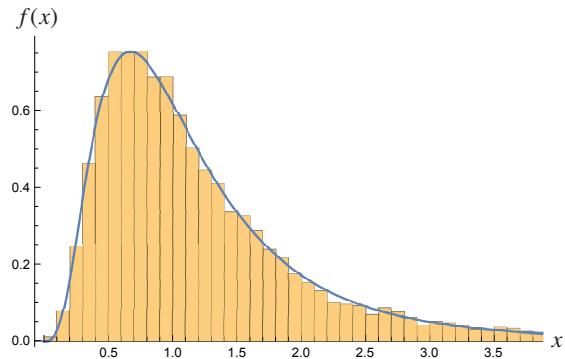}
\rlap{\raisebox{28.0ex}{\kern-20.em{\small $f(x)$}}}%
\rlap{\raisebox{.15cm}{\kern0cm{\small $x$}}}
\caption{$n_i=10~(i=1,2)$ and $m=30$ }
 \label {fig1}
\end{center}
\end{figure}

\section*{Acknowlegments}
The first author has received partial funding from Grant-in-Aid for JSPS Fellows (No.22KJ2804).





\end{document}